\newtheorem{theorem}{Theorem}%[section]
\newtheorem{lemma}[theorem]{Lemma}
\newtheorem{corollary}[theorem]{Corollary}
\theoremstyle{definition}
\newtheorem*{acknowledgements*}{Acknowledgements}
\theoremstyle{remark}
\newtheorem{remark}[theorem]{Remark}
\newcommand{\R}{\mathbb R}
\newcommand{\C}{\mathbb C}
\newcommand{\T}{\mathbb T}
\newcommand{\D}{\mathbb D}
\def\he{H(E)}
\def\hf{H(\Lambda(\varphi))}
\def\laf{\Lambda(\varphi)}
\def\h{H^1(\D)}
\newcounter{obs}
\author[J.~Carrillo-Alan\'{\i}s]{Javier Carrillo-Alan\'{\i}s}
\address{Facultad de Matem\'aticas,
Universidad de Sevilla, 
Calle Tarfia s/n,  Sevilla 41012, Spain}
\email{fcarrillo@us.es}
\author[G.~P. Curbera]{Guillermo P. Curbera}
\address{Facultad de Matem\'aticas \& IMUS,
Universidad de Sevilla, 
Calle Tarfia s/n,  Sevilla 41012, Spain}
\email{curbera@us.es}
\title
[Extreme points of the unit ball of Hardy-Lorentz spaces]
{A note on extreme points of the unit ball \\ of Hardy-Lorentz spaces}
\begin{document}

\begin{abstract}
We show that inner functions are extreme points of the  unit ball of the Hardy-Lorentz
space $H(\Lambda(\varphi))$, for $\Lambda(\varphi)$ 
a Lorentz space with $\varphi$ strictly increasing and strictly concave.
\end{abstract}

\maketitle
%   FOR FOOTNOTE AS FILE TITLE
% \let\thefootnote\relax\footnotetext{{\jobname.pdf}}

%%%%%%%%%%%%%%%%%%%%%%%%%%%%%%%%%%%%%

\section{Introduction}
\label{S1}

%%%%%%%%%%%%%%%%%%%%%%%%%%%%%%%%%%%%%

In 1958 De Leeuw and Rudin characterized the extreme points of the unit ball
of the Hardy space $\h$, \cite[Theorem 1]{deleeuw-rudin}.

\begin{theorem}[De Leeuw-Rudin]\label{t-1}
A function in $\h$ is an extreme point of the unit ball of ${\h}$ if 
and only if has norm one and  is an outer function.
\end{theorem}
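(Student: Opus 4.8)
The plan is to translate ``$f$ is not an extreme point'' into the existence of a suitable \emph{real} multiplier of $f$, and then decide this alternative from the inner--outer factorization of $f$. Throughout I normalize $\|f\|_1=\frac{1}{2\pi}\int_0^{2\pi}|f(e^{it})|\,dt$. The trivial half is immediate: if $\|f\|_1<1$ then $f$ is interior to the ball, and $f=0$ is the midpoint of $\pm u$, so an extreme point must have $\|f\|_1=1$; it remains to treat norm-one functions. Suppose such an $f$ is written as $f=\tfrac12(f_1+f_2)$ with $f_1,f_2$ in the ball, and set $h=f_1-f=\tfrac12(f_1-f_2)\in\h$, so that $f_1=f+h$, $f_2=f-h$ and $\|f\pm h\|_1\le1$. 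Adding the two inequalities and using the pointwise bound $|f+h|+|f-h|\ge 2|f|$ together with $\frac{1}{2\pi}\int 2|f|=2$ forces $|f+h|+|f-h|=2|f|$ a.e.\ on $\T$. The equality case of the triangle inequality then says exactly that $\phi:=h/f$ is \emph{real-valued} with $|\phi|\le1$ a.e.\ (away from the zero set of $f$, which is null). Thus $f$ fails to be extreme precisely when there is a nonconstant real $\phi\in L^\infty(\T)$ with $f\phi\in\h$.

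For the ``if'' direction I assume $f$ is outer with $\|f\|_1=1$ and take $\phi=h/f$ as above, real and bounded with $f\phi=h\in\h$. Since $f$ is outer, its reciprocal $1/f$ is again outer, hence lies in the Smirnov class $N^+$; therefore $\phi=(f\phi)\cdot(1/f)\in N^+$. A bounded function in $N^+$ belongs to $H^\infty$ (Smirnov's theorem), so $\phi\in H^\infty$ has real boundary values, whence its harmonic imaginary part vanishes and $\phi\equiv c$ is a real constant. Then $h=cf$, so $f_1=(1+c)f$ and $f_2=(1-c)f$; the constraints $\|f_1\|_1,\|f_2\|_1\le1$ give $|1\pm c|\le1$, forcing $c=0$ and $f_1=f_2=f$. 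Hence $f$ is extreme.

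For the ``only if'' direction I argue the contrapositive: if $f$ (of norm one) is not outer, I produce the forbidden multiplier. Factor $f=\Theta g$ with $\Theta$ a nonconstant inner function and $g$ outer, and set $\phi=\operatorname{Re}\Theta=\tfrac12(\Theta+\overline{\Theta})$, a real function with $|\phi|\le1$. Using $\Theta\overline{\Theta}=1$ a.e.\ on $\T$ one computes $f\phi=\tfrac12(\Theta^2 g+g)$ on the boundary, and since $\Theta^2 g, g\in\h$ this shows $f\phi\in\h$. Moreover $\phi$ is nonconstant: were $\operatorname{Re}\Theta$ constant, the boundary values of $\Theta$ would lie on a vertical line meeting the unit circle in at most two points, impossible for a nonconstant inner function. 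Finally I normalize: replacing $\phi$ by $\phi-c$ with $c=\frac{1}{2\pi}\int_0^{2\pi}|f|\phi\,dt$ and scaling by a small $\varepsilon>0$ so that $|\varepsilon(\phi-c)|\le1$, the function $h=\varepsilon f(\phi-c)\in\h$ is nonzero, $h/f$ is real with modulus $\le1$, and $\int\frac{\overline f}{|f|}h=\varepsilon\int|f|(\phi-c)=0$; hence $\|f\pm h\|_1=1$ and $f=\tfrac12((f+h)+(f-h))$ exhibits $f$ as a nontrivial midpoint.

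I expect the main obstacle to be the sharp equality analysis that converts the two norm inequalities into the single clean condition ``$h/f$ is real and bounded by $1$,'' since everything downstream rests on it; the other delicate point is the use of Smirnov's theorem to upgrade a real $N^+$ function to a constant in the ``if'' direction. By contrast, the genuinely clever but short step is guessing the multiplier $\phi=\operatorname{Re}\Theta$, whose whole purpose is to let $\Theta\overline{\Theta}=1$ cancel the inner factor and land $f\phi$ back in $\h$.
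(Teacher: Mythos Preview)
The paper does not prove Theorem~\ref{t-1}; it merely states the De Leeuw--Rudin theorem with a citation to \cite{deleeuw-rudin}, so there is no argument in the paper to compare against. Your proof is correct and is essentially the classical one (as in de Leeuw--Rudin, or Duren's exposition): reduce the extremality of a norm-one $f$ to the nonexistence of a nontrivial real bounded multiplier $\phi$ with $f\phi\in H^1$, then settle that alternative via the inner--outer factorization, using Smirnov's theorem in the outer case and the explicit multiplier $\phi=\operatorname{Re}\Theta$ in the non-outer case.

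Two small points worth tightening. First, your sentence ``$f$ fails to be extreme precisely when there is a nonconstant real $\phi\in L^\infty(\T)$ with $f\phi\in H^1$'' is not literally what you derived: from $\|f\pm h\|_1\le1$ you obtained $\phi$ real with $|\phi|\le1$ \emph{and} $\int|f|\phi=0$. The mean-zero condition is needed for the converse as well, and indeed you restore it later by subtracting $c$; it would be cleaner to state the characterization with that normalization from the outset. Second, the claim that $\operatorname{Re}\Theta$ constant forces $\Theta$ constant deserves one line: if the boundary values of $\Theta$ lie in a two-point set $\{a,b\}$, then $(\Theta-a)(\Theta-b)\in H^\infty$ vanishes a.e.\ on $\T$ and hence identically, so $\Theta$ is constant.
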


In 1974 Bryskin and Sedaev proved a partial extension of the  result
by De Leeuw and Rudin, \cite[Theorem 1]{bryskin-sedaev}.

Let  $E$ be a symmetric (rearrangement invariant) space on $\T$ satisfying 
the Fatou property.
The space $\he$ is the set of all analytic functions $f\colon\D\to\C$ such that
\begin{equation*}
\|f\|_{\he}:=\sup_{0\le r<1}\|f_r\|_E<\infty,
\end{equation*}
where $f_r(e^{i\theta}):=f(re^{i\theta})$, for $0\le r<1$ and 
$\theta\in[0,2\pi]$. Since $\he\subseteq\h$, functions in $f\in\he$ have a.e.\ boundary 
values, which we  still denote by $f$. Thus, for $f\in\he$, the function $e^{i\theta}\in\T\mapsto 
f(e^{i\theta})$ is defined a.e.\ and belongs to $E$.

The  space   $E$ has  
\textit{strictly monotone norm} if $\|f\|_E<\|g\|_E$ whenever  
$f,g\in E$ satisfy $|f(e^{i\theta})|\le|g(e^{i\theta})|$ a.e.\ in 
$\T$ and 
\begin{equation*}
m(\{e^{i\theta}\in\T:|f(e^{i\theta})|<|g(e^{i\theta})|\})>0.
\end{equation*}

\begin{theorem}[Bryskin-Sedaev]\label{t-2}
If  $E$ has strictly monotone norm,  then every outer function 
in $\he$  with norm one   is an extreme point of the unit ball of ${\he}$. 
\end{theorem}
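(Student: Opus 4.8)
The plan is to push everything to the boundary and then exploit the interplay between strict monotonicity of the norm and the fact that an outer function is zero-free in $\D$. First I would record that for $f\in\he$ the Hardy-Lorentz norm coincides with the $E$-norm of the boundary function: since the Poisson kernel $P_r$ is a probability density, $\|f_r\|_E=\|f*P_r\|_E\le\|f\|_E$, while the Fatou property gives $\|f\|_E\le\liminf_{r\to1}\|f_r\|_E$; hence $\|f\|_{\he}=\|f\|_E$. Thus it suffices to argue with the boundary functions, viewed as elements of $E$ on $\T$.

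Now suppose $f$ is outer with $\|f\|_{\he}=1$ and $f=\tfrac12(g+h)$ with $\|g\|_{\he},\|h\|_{\he}\le 1$. On $\T$ we then have $\|f\|_E=1$, $\|g\|_E,\|h\|_E\le 1$, and $|f|\le u:=\tfrac12(|g|+|h|)$ a.e., where $\|u\|_E\le\tfrac12(\|g\|_E+\|h\|_E)\le 1=\|f\|_E$. Here I would invoke strict monotonicity of the norm: if $m(\{|f|<u\})>0$ then $\|f\|_E<\|u\|_E\le 1$, contradicting $\|f\|_E=1$. Hence $|f|=u$ a.e., that is $|g+h|=2|f|=|g|+|h|$ a.e., which is equality in the triangle inequality and forces $g$ and $h$ to have the same argument a.e.\ on $\T$ (where they are nonzero).

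The decisive step is to convert this alignment into a genuine identity. Since $f$ is outer it is zero-free in $\D$ and $1/f$ lies in the Smirnov class $N^+$, so $F:=g/f$ is a well-defined function in $N^+$. From the alignment together with $g+h=2f$ one computes, at a.e.\ point of $\T$, that $F=2|g|/(|g|+|h|)$, so $F$ has real boundary values lying in $[0,2]$. A Smirnov-class function with bounded boundary values belongs to $H^\infty$, and a bounded analytic function whose boundary values are real a.e.\ must be constant (it equals the Poisson integral of a real function, hence is real-valued and harmonic, hence constant). Therefore $F\equiv c$ for some real $c\in[0,2]$, which yields $g=cf$ and $h=(2-c)f$.

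It remains to pin down $c$. From $\|g\|_E=c\,\|f\|_E=c\le 1$ and $\|h\|_E=(2-c)\le 1$ we conclude $c=1$, whence $g=h=f$, so $f$ is an extreme point of the unit ball of $\he$. I expect the main obstacle to be the analytic input of the third paragraph: that $1/f\in N^+$ when $f$ is outer, that boundedness of $|F|$ on $\T$ upgrades $F\in N^+$ to $F\in H^\infty$ (Smirnov's maximum principle), and that real boundary values force a bounded analytic function to be constant. The remainder is the soft combination of strict monotonicity with the equality case of the triangle inequality.
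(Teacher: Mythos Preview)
The paper does not actually prove Theorem~\ref{t-2}: it is quoted as a result of Bryskin and Sedaev, with a reference to \cite{bryskin-sedaev}, and no argument is supplied. So there is no ``paper's own proof'' to compare your attempt against.

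That said, your argument is correct and self-contained. The reduction $\|f\|_{\he}=\|f\|_E$ is justified exactly as you say: convolution with $P_r$ is a contraction on any rearrangement-invariant space with the Fatou property (it has norm $1$ on both $L^1$ and $L^\infty$, so the Calder\'on--Mityagin interpolation property applies), and the Fatou property gives the reverse inequality. The strict-monotonicity step forcing $|g+h|=|g|+|h|$ a.e.\ is clean, and the identification $F=g/f=2|g|/(|g|+|h|)\in[0,2]$ is valid because $|g|+|h|=2|f|>0$ a.e.\ (an outer function has $\log|f|\in L^1(\T)$, hence $|f|>0$ a.e.). The analytic core --- $1/f$ outer hence in $N^+$, $N^+$ closed under products, Smirnov's theorem upgrading $F\in N^+$ with bounded boundary values to $F\in H^\infty$, and the constancy of a bounded analytic function with real boundary values --- is all standard and correctly invoked. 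The final pinch $c\le1$ and $2-c\le1$ gives $c=1$.

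If anything, your route is perhaps closer in spirit to the original de~Leeuw--Rudin argument for $H^1$ (quotient by the outer function, analyse the resulting bounded analytic function) than to the rearrangement-based techniques the present paper uses for its own Theorem~\ref{t-5}; but since the paper offers no proof here, this is merely a remark on style.
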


For a function $\varphi\colon[0,2\pi]\to\R$, increasing and concave with 
$\varphi(0)=0$, the 
Lorentz
space $\laf$ is the set of all measurable functions $f\colon\T\to\C$ such that
\begin{equation*}
\|f\|_{\laf}:=\int_0^{2\pi} f^*(t)\varphi'(t)\,dt<\infty,
\end{equation*}
where $f^*\colon[0,2\pi]\to\R$ is the decreasing rearrangement of $f$.
The space $\Lambda(\varphi)$ is  symmetric. The Hardy-Lorentz space is $H(\Lambda(\varphi))$.
Since $\varphi$ being strictly monotone implies that  the norm in $\laf$ is strictly monotone, the following results holds, 
\cite[Corollary]{bryskin-sedaev}.

\begin{theorem}[Bryskin-Sedaev]\label{t-3}
Let $\varphi$ be a strictly increasing, concave  function on $[0,2\pi]$
with $\varphi(0)=0$.  
Then every outer function 
in $\hf$  with norm one   is an extreme point of the unit ball of ${\hf}$. 
\end{theorem}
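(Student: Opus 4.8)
The plan is to obtain the statement as an immediate consequence of Theorem~\ref{t-2}: taking $E=\laf$, it suffices to verify that, under the stated hypotheses on $\varphi$, the space $\laf$ has strictly monotone norm, and then every norm-one outer function in $\hf$ is an extreme point of the unit ball. Thus the entire content of the proof is the implication ``$\varphi$ strictly increasing (and concave) $\Rightarrow$ $\laf$ has strictly monotone norm,'' and I would organize the argument around decreasing rearrangements and the positivity of $\varphi'$.

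So I would take $f,g\in\laf$ with $|f|\le|g|$ a.e.\ on $\T$ and $m(\{|f|<|g|\})>0$, and aim to show $\|f\|_{\laf}<\|g\|_{\laf}$. First I would record the elementary monotonicity of the rearrangement: since $|f|\le|g|$ forces $m(\{|f|>s\})\le m(\{|g|>s\})$ for every $s\ge 0$, one gets $f^*(t)\le g^*(t)$ for all $t\in[0,2\pi]$. The crucial step is to upgrade this to a \emph{strict} inequality on a set of positive measure. Here I would argue by contradiction: if $f^*=g^*$ a.e., then $f$ and $g$ are equimeasurable, so (noting $\laf\subseteq L^1(\T)$, which holds because $\varphi'$ is bounded below by a positive constant on $[0,\pi]$ and $f^*$ is decreasing hence bounded on $[\pi,2\pi]$) one has $\int_\T|f|\,dm=\int_0^{2\pi}f^*=\int_0^{2\pi}g^*=\int_\T|g|\,dm$; combined with $|g|-|f|\ge 0$ a.e.\ this forces $|f|=|g|$ a.e., contradicting $m(\{|f|<|g|\})>0$. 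Hence the set $A:=\{t\in[0,2\pi]:f^*(t)<g^*(t)\}$ has $m(A)>0$.

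It then remains to see that $\varphi'$ is strictly positive where it matters. Since $\varphi$ is concave, it is differentiable a.e.\ and $\varphi'$ is nonincreasing; if $\varphi'$ vanished at some $t_0<2\pi$ then, being nonnegative (as $\varphi$ increases) and nonincreasing, it would vanish on $[t_0,2\pi]$, making $\varphi$ constant there and contradicting strict monotonicity. Thus $\varphi'>0$ a.e.\ on $[0,2\pi]$. Writing
\begin{equation*}
\|g\|_{\laf}-\|f\|_{\laf}=\int_0^{2\pi}\bigl(g^*(t)-f^*(t)\bigr)\varphi'(t)\,dt=\int_A\bigl(g^*-f^*\bigr)\varphi'\,dt,
\end{equation*}
the integrand is nonnegative and, on the positive-measure set $A$ (intersected with the full-measure set $\{\varphi'>0\}$), strictly positive, so the integral is strictly positive and $\|f\|_{\laf}<\|g\|_{\laf}$. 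This is precisely strict monotonicity of the norm, and Theorem~\ref{t-2} finishes the proof.

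I expect the only genuinely nontrivial point to be the middle step, passing from $|f|\le|g|$ with strict inequality on a positive-measure set to $f^*<g^*$ on a positive-measure set, since pointwise strictness need not survive rearrangement in general; the equimeasurability-plus-$L^1$ argument is what makes it go through. The positivity of $\varphi'$ and the final integral estimate are routine once that is in hand.
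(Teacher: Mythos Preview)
Your proposal is correct and follows essentially the same approach as the paper: the paper derives Theorem~\ref{t-3} from Theorem~\ref{t-2} via the one-line remark that ``$\varphi$ being strictly monotone implies that the norm in $\laf$ is strictly monotone,'' and you have simply supplied the (correct) details behind that remark. Your argument for passing from $|f|\le|g|$ with strict inequality on a positive-measure set to $f^*<g^*$ on a positive-measure set is fine; one could also avoid the $L^1$-inclusion step by truncating at height $N$ and noting that equimeasurability plus pointwise domination persist under truncation.
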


A further result of Bryskin and Sedaev shows that not all norm one functions in   
$\hf$ are extreme points of the unit ball of  $\hf$, 
\cite[Theorem 2]{bryskin-sedaev}.

\begin{theorem}[Bryskin-Sedaev]\label{t-4}
Let $f $ be a norm 
one function in the space $\hf$ with  $f(\alpha)=0$ for some $\alpha\in\D$.
Set $\mu(\theta):=|f(e^{i\theta})|$, for $\theta \in [0,2\pi]$. Suppose 
that $\mu$ is continuously differentiable on
$[0,2 \pi]$ and for some $\delta>0$  the 
inequality $\mu'(\theta) \leq -\delta$ holds for  all $\theta \in [0,2\pi]$. 
Then $f$ is not an extreme point of the unit ball in $\hf$.
\end{theorem}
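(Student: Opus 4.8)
The plan is to exhibit a nontrivial decomposition $f=\tfrac12\big(f(1+u)+f(1-u)\big)$ with both $f(1\pm u)$ lying on the unit sphere of $\hf$, in the spirit of De~Leeuw--Rudin's argument for $\h$. Concretely, I would search for a real-valued, smooth, nonzero function $u$ on $\T$, small in $C^1$-norm, such that (i) $fu$ is analytic, i.e.\ $fu\in\h$, and (ii) the balance condition $\int_0^{2\pi}\mu(\theta)\,u(\theta)\,\varphi'(\theta)\,d\theta=0$ holds. Granting such a $u$, the hypotheses on $\mu$ let me evaluate the two norms exactly, and non-extremality follows at once.

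Producing the perturbation is the step where the zero of $f$ enters. Since $f(\alpha)=0$, factor $f=b_\alpha f_1$ with $b_\alpha(z)=(z-\alpha)/(1-\bar\alpha z)$ and $f_1\in\h$ (dividing out one Blaschke factor leaves the boundary modulus, hence the space, unchanged, so $|f_1|=\mu$ on $\T$). For $c\in\C$ set $u=\operatorname{Re}(c/b_\alpha)$; this is real and smooth on $\T$ because $|b_\alpha|=1$ there. Using $1/b_\alpha=\overline{b_\alpha}$ on $\T$ one computes $fu=\tfrac12 f_1\big(c+\bar c\,b_\alpha^2\big)$, a product of an $\h$ function with a bounded analytic function, hence $fu\in\h$; thus (i) holds for every $c$. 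This yields a two-real-parameter family, while the balance condition (ii) is a single real-linear constraint on $c\in\C\cong\R^2$, so it is satisfied by some $c\neq0$; for such $c$ one checks $u\not\equiv0$, since $\operatorname{Re}(c\,\overline{b_\alpha(e^{i\theta})})$ cannot vanish identically as $b_\alpha$ sweeps out $\T$. Finally replace $u$ by $\epsilon u$ with $\epsilon$ small.

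The role of the strong monotonicity $\mu'\le-\delta$ is to linearize the Lorentz norm. Because $\mu$ is decreasing it equals its own decreasing rearrangement, so $\|f\|_{\laf}=\int_0^{2\pi}\mu\,\varphi'\,d\theta=1$. For $\epsilon$ small, $\mu(1\pm\epsilon u)$ is still nonnegative and, since $\frac{d}{d\theta}\big[\mu(1\pm\epsilon u)\big]=\mu'(1\pm\epsilon u)\pm\epsilon\mu u'\le-\tfrac{\delta}{2}+O(\epsilon)<0$, it remains strictly decreasing, hence is again its own rearrangement. Therefore $\|f(1\pm\epsilon u)\|_{\laf}=\int_0^{2\pi}\mu(1\pm\epsilon u)\varphi'\,d\theta=1\pm\epsilon\int_0^{2\pi}\mu u\varphi'\,d\theta=1$ by (ii). Both $f(1\pm\epsilon u)$ are then norm-one elements of $\hf$, they are distinct because $\epsilon u\not\equiv0$, and $f$ is their midpoint; so $f$ is not an extreme point.

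I expect the main obstacle to be precisely the construction of $u$: one must find a single perturbation that is simultaneously real, keeps $fu$ analytic, and respects the weighted orthogonality (ii), all while staying small enough to preserve monotonicity. The factorization $f=b_\alpha f_1$ together with the ansatz $u=\operatorname{Re}(c/b_\alpha)$ disposes of the first two requirements and leaves enough freedom (two real parameters against one linear constraint) to secure the third; verifying that the surviving $u$ is genuinely nonzero, and that $C^1$-smallness really does preserve strict monotonicity, are the points needing care.
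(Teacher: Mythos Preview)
The paper does not prove this theorem; it is quoted without proof from Bryskin--Sedaev as background for the main results, so there is no in-paper argument to compare against.

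Your argument is correct. The factorization $f=b_\alpha f_1$ together with the ansatz $u=\operatorname{Re}(c/b_\alpha)$ gives, via the boundary identity $fu=\tfrac12\,f_1\big(c+\bar c\,b_\alpha^{\,2}\big)$ on $\T$, a two-real-parameter family of bounded real perturbations with $fu\in\hf$ (a bounded analytic multiple of $f_1$, and $|f_1|=\mu$ on $\T$ so $f_1\in\hf$). The single linear balance constraint $\int_0^{2\pi}\mu u\,\varphi'\,d\theta=0$ leaves a nonzero $c$, and $u\not\equiv0$ follows because $b_\alpha$ maps $\T$ onto $\T$. For small $\epsilon$ the estimate is slightly cleaner written as
\[
\frac{d}{d\theta}\big[\mu(1\pm\epsilon u)\big]=\mu'\pm\epsilon(\mu u)'\le-\delta+\epsilon\,\|(\mu u)'\|_\infty<0,
\]
which shows $\mu(1\pm\epsilon u)$ is strictly decreasing on $[0,2\pi]$ and hence coincides with its own decreasing rearrangement; the Lorentz norm then reduces to $\int_0^{2\pi}\mu(1\pm\epsilon u)\varphi'\,d\theta=1$ by the balance condition, and non-extremality follows. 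One small remark: the hypothesis $\mu'\le-\delta$ on $[0,2\pi]$ forces $\mu(0)>\mu(2\pi)$, so the boundary modulus has a jump at $z=1$; this is harmless for your rearrangement argument, but worth noting explicitly. This is precisely the De~Leeuw--Rudin mechanism adapted to the weight $\varphi'$, and is presumably close to the original Bryskin--Sedaev proof.
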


In 1978, E.~M.~Semenov stated the problem of describing the set of extreme
points of the unit ball of $\hf$, \cite{semenov}. 
The problem was part of a collection of 99 problems in linear and complex analysis
from the  Leningrad branch of the Steklov Mathematical Institute,  \cite[5.1 pp. 23--24]{khavin-1978},
and appeared in the subsequent list of problems in 1984, 
\cite[1.6 pp. 22--23]{khavin-1984}, and in 1994,
\cite[1.5 p. 12]{khavin-1994}.

For matters related to symmetric (rearrangement invariant) spaces we refer
the reader to \cite{bennett-sharpley}, \cite{krein-petunin-semenov}. For matters 
related to Hardy spaces we refer the reader to \cite{duren}.

%%%%%%%%%%%%%%%%%%%%%%%%%%%%%%%%%%%%%

\section{A result}
\label{S2}

%%%%%%%%%%%%%%%%%%%%%%%%%%%%%%%%%%%%%

It could be expected that the extreme points of the unit ball of
$\hf$ would behave as for $\h$, so that,
following Theorem  \ref{t-1}, every extreme point 
of the unit ball of $\hf$ is an outer function. We  show 
that is not necessarily the case.

\begin{theorem}\label{t-5}
Let $\varphi$ be an  increasing,  concave  function on $[0,2\pi]$ with 
$\varphi(0)=0$. Suppose  that 
if $f, g \in\laf$ satisfy 
$$
 \|f\|_{\laf}+ \|g\|_{\laf}=\|f+g\|_{\laf},
$$ 
then
$$
f^* + g^*=(f+g)^*.
$$
Then every inner  function is an extreme point of the unit ball of ${\hf}$. 
\end{theorem}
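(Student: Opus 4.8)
The plan is to argue by contradiction, using the standard reformulation that $f$ is an extreme point of the unit ball exactly when there is no nonzero $\psi$ with both $f+\psi$ and $f-\psi$ in the ball, and recalling that the norm in $\hf$ is the $\laf$-norm of the boundary function. Let $f$ be an inner function. Since $|f(e^{i\theta})|=1$ a.e.\ on $\T$, its decreasing rearrangement is $f^*\equiv1$, whence $\|f\|_{\laf}=\int_0^{2\pi}\varphi'(t)\,dt=\varphi(2\pi)$; after the harmless normalization $\varphi(2\pi)=1$ we have $\|f\|_{\laf}=1$. Suppose $f$ is not extreme, and choose $\psi\in\hf$, $\psi\neq0$, so that $g:=f+\psi$ and $h:=f-\psi$ both lie in the unit ball. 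From $f=\tfrac12(g+h)$ and the triangle inequality,
\[
1=\|f\|_{\laf}=\tfrac12\,\|g+h\|_{\laf}\le\tfrac12\bigl(\|g\|_{\laf}+\|h\|_{\laf}\bigr)\le1 ,
\]
so every inequality is an equality: in particular $\|g\|_{\laf}=\|h\|_{\laf}=1$ and, crucially, $\|g\|_{\laf}+\|h\|_{\laf}=\|g+h\|_{\laf}$.

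This last identity is precisely the premise of the hypothesis on $\varphi$, which therefore yields $g^*+h^*=(g+h)^*$. Now $g+h=2f$ with $|2f|=2$ a.e.\ on $\T$, so $(g+h)^*\equiv2$ and hence $g^*(t)+h^*(t)=2$ on $[0,2\pi]$. Here is the step I expect to be the heart of the matter: $g^*$ and $h^*$ are both \emph{non-increasing}, yet their sum is the constant $2$, so $h^*=2-g^*$ is simultaneously non-increasing and non-decreasing; thus both $g^*$ and $h^*$ are constant. Since $\|g\|_{\laf}=g^*(0)\,\varphi(2\pi)=g^*(0)=1$ and likewise for $h$, we get $g^*\equiv h^*\equiv1$. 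By equimeasurability a constant rearrangement forces a constant modulus, so $|g|=|h|=1$ a.e.\ on $\T$; combined with $|f|=1$ a.e.\ this gives $|f|=|f+\psi|=|f-\psi|=1$ a.e.\ on $\T$.

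Finally I would compute pointwise on $\T$. Expanding the squared moduli,
\[
|f+\psi|^2=1+2\operatorname{Re}(\bar f\psi)+|\psi|^2,\qquad
|f-\psi|^2=1-2\operatorname{Re}(\bar f\psi)+|\psi|^2 ,
\]
and each left-hand side equals $1$; adding the two resulting equations gives $2|\psi|^2=0$, i.e.\ $\psi=0$ a.e.\ on $\T$. But $\psi\in\hf\subseteq\h$, and a nonzero $H^1$ function cannot vanish on a subset of $\T$ of positive measure; hence $\psi\equiv0$ on $\D$, contradicting $\psi\neq0$. Therefore $f$ is an extreme point of the unit ball of $\hf$.

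I expect the only genuinely delicate point to be the passage from norm-additivity to the conclusion $|g|=|h|=1$: the hypothesis on $\varphi$ is tailored exactly to upgrade the equality case of the triangle inequality into the rearrangement-additivity $g^*+h^*=(g+h)^*$, and then the monotonicity of decreasing rearrangements pins $g^*,h^*$ down to constants almost for free. The subsequent pointwise computation and the appeal to boundary uniqueness for $\h$ are routine, and the normalization $\varphi(2\pi)=1$ only fixes the scale on which inner functions have norm one.
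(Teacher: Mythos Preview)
Your proof is correct and follows essentially the same route as the paper's: apply the norm-additivity hypothesis to the pair $f+\psi$, $f-\psi$, use that two non-increasing functions summing to a constant must themselves be constant, deduce $|f+\psi|=|f-\psi|=1$ a.e.\ on $\T$, and conclude $\psi=0$. If anything, your write-up is more careful than the paper's at the final step, where you expand the squared moduli explicitly rather than simply asserting that $|f+\psi|=|f-\psi|$ forces $\psi=0$.
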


\begin{proof}
We  assume that the norm in $\laf$
is normalized so that $\|\chi_\T\|_{\laf}=1$. Let $f$ be an inner function.  
Since $|f(e^{i \theta})|=1$, a.e.\ in $\T$, we have  $f^*=\chi_{[0,2\pi]}$, so 
$f\in \hf$ and  $\|f\|_{\hf}=1$. 
 
In order to check that $f$  is an extreme point
of the unit ball of $\hf$ we need to show that  if for some $h\in  \hf$ we have
\begin{equation}\label{2}
\|f+h\|_{\hf}=\|f-h\|_{\hf}=\|f\|_{\hf}=1,
\end{equation}
then necessarily $h=0$. 

From \eqref{2} it follows that
\begin{equation*}\label{3}
\|f+h\|_{\hf}+\|f-h\|_{\hf}=2=\|2f\|_{\hf}.
\end{equation*}
It follows,   from the assumption, that
$$
 (f+h)^* + (f-h)^*= 2f^*=2\chi_{[0,2\pi]}=2.
$$
Since $(f+g)^*$ and $(f-g)^*$ are decreasing functions and their sum is a 
constant function, both functions $(f+g)^*$ and $(f-g)^*$ must be constant. 
Hence, $(f+h)^* = c_1$ and $(f-h)^*=c_2$. This, 
together with $\|f + h \|_{\hf}=\|f-h\|_{\hf}=1$, 
implies that $c_1=c_2=1$.
Thus, 
$$
|f(e^{i \theta})+h(e^{i \theta})|=1, \quad |f(e^{i \theta})-h(e^{i \theta})|=1,
\quad  \mathrm{a.e. }\; e^{i \theta}\in\T. 
$$
Thus,  we have
$$
|f(e^{i  \theta}) + h(e^{i \theta})|= |f(e^{i  \theta}) - h(e^{i \theta})|,
\quad  \mathrm{a.e. }\; e^{i \theta}\in\T.
$$
This is only possible if $h(e^{i \theta})=0$ a.e. Thus,  $h=0$ on $\D$, and so $f$ 
is an extreme point of the unit ball of $\hf$.
\end{proof}

%%%%%%%%%%%%%%%%%%%%%

The following result gives a simple and explicit condition on the function 
$\varphi$ for the space 
$\laf$ satisfying the requirement of Theorem \ref{t-5}.

\begin{lemma}\label{l-3}
Let $\varphi$ be a strictly increasing and  strictly concave  
function on $[0,2\pi]$ with 
$\varphi(0)=0$. Suppose that  $f,g \in \Lambda(\varphi)$ satisfy 
$$\|f\|_{\Lambda(\varphi)} + \|g\|_{\Lambda(\varphi)}=\|f+g\|_{\Lambda(\varphi)},
$$
then 
$$ f^* + g^*=(f+g)^*. 
$$
\end{lemma}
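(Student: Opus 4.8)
The plan is to rewrite the Lorentz norm through the ``integrated rearrangement'' and reduce the equality condition to a pointwise identity. Write $A_f(s):=\int_0^s f^*(t)\,dt$ for the primitive of the decreasing rearrangement; since $\varphi$ is concave, $\varphi'$ is nonnegative and decreasing, and it induces a positive Lebesgue--Stieltjes measure $\nu:=-d\varphi'$ on $(0,2\pi)$. Using $\varphi'(t)=\varphi'(2\pi)+\nu((t,2\pi])$ together with Tonelli's theorem, I would obtain the representation
\[
\|f\|_{\Lambda(\varphi)}=\varphi'(2\pi)\,A_f(2\pi)+\int_{(0,2\pi]}A_f(s)\,d\nu(s),
\]
and the analogous formulas for $g$ and $f+g$. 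As a preliminary remark, because $\varphi'$ is bounded below by a positive constant on each interval $[\varepsilon,2\pi]$, every $h\in\Lambda(\varphi)$ has $h^*\in L^1$, so the functions $A_f$, $A_g$, $A_{f+g}$ are finite and continuous on $[0,2\pi]$.

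The main inequality I would invoke is the subadditivity of the integrated rearrangement, $A_{f+g}(s)\le A_f(s)+A_g(s)$ for every $s\in[0,2\pi]$, a standard consequence of the identity $\int_0^s h^*=\sup_{m(E)=s}\int_E|h|$ (see \cite{bennett-sharpley}). Substituting this into the three norm representations recovers the triangle inequality $\|f+g\|_{\Lambda(\varphi)}\le\|f\|_{\Lambda(\varphi)}+\|g\|_{\Lambda(\varphi)}$ term by term. Under the equality hypothesis, the nonnegative quantity
\[
\|f\|_{\Lambda(\varphi)}+\|g\|_{\Lambda(\varphi)}-\|f+g\|_{\Lambda(\varphi)}=\varphi'(2\pi)\bigl(A_f+A_g-A_{f+g}\bigr)(2\pi)+\int_{(0,2\pi]}\bigl(A_f+A_g-A_{f+g}\bigr)(s)\,d\nu(s)
\]
vanishes; as both summands are nonnegative, the integral must be zero, forcing $A_{f+g}(s)=A_f(s)+A_g(s)$ for $\nu$-almost every $s$.

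The step where strict concavity enters, and the one I expect to require the most care, is promoting this $\nu$-almost-everywhere identity to an identity for all $s$. Strict concavity means $\varphi'$ is \emph{strictly} decreasing, so $\nu$ assigns positive mass to every nondegenerate open subinterval of $(0,2\pi)$; that is, $\nu$ has full support. The set $U:=\{s\in(0,2\pi):A_{f+g}(s)<A_f(s)+A_g(s)\}$ is open by continuity of the integrated rearrangements, and were it nonempty it would contain an interval of positive $\nu$-measure, contradicting the vanishing of the integral. Hence $U=\varnothing$, and by continuity the equality $A_{f+g}(s)=A_f(s)+A_g(s)$ holds on all of $[0,2\pi]$, the endpoint included. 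Finally, differentiating this identity in $s$, all three functions being absolutely continuous, yields $(f+g)^*=f^*+g^*$ almost everywhere, which is the asserted conclusion.
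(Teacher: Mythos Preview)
Your argument is correct and follows essentially the same route as the paper: both proofs rewrite $\|\cdot\|_{\Lambda(\varphi)}$ via the integrated rearrangement and the measure $-d\varphi'$, invoke the subadditivity $A_{f+g}\le A_f+A_g$, and use strict concavity (full support of $-d\varphi'$) to force equality pointwise, then differentiate. The only minor deviation is that the paper first uses strict monotonicity of the norm (from $\varphi$ strictly increasing) to obtain $|f+g|=|f|+|g|$ a.e.\ and hence $\|f+g\|_{L^1}=\|f\|_{L^1}+\|g\|_{L^1}$, cancelling the boundary term explicitly, whereas you simply note that both summands in the decomposition are nonnegative and therefore vanish separately---a small streamlining, not a different method.
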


\begin{proof}
We use the following  alternative expression for the norm  $\|f\|_{\laf}$
of $f$ in $\laf$:
\begin{equation}\label{1}
\|f\|_{\laf} =  \int_0^{2\pi} F(t)  d(- \varphi'(t)) 
+ \varphi'(2\pi)  \|f\|_{L^1(\T)},
\end{equation}
where 
$$
F(t):=\int_0^t f^*(s) ds, \quad t\in [0,2 \pi],
$$
which follows, via integration-by-parts, from the definition of the norm in $\laf$;
see, for example, \cite{carothers-turett}.

From 
$$
\|f\|_{\laf} + \|g\|_{\laf} = \|f+g\|_{\laf} \leq \| |f| + |g| \|_{\laf} \leq \|f\|_{\laf} + \|g\|_{\laf},
$$
it follows that  
$$
\|f+g\|_{\laf} = \| |f| + |g| \|_{\laf}.
$$
Since $\varphi$ is strictly increasing, the norm in $\laf$ is strictly monotone.
It follows that $|f+g| = |f| + |g|$ a.e.\ in $\T$. This implies (is equivalent to, in fact) that
$f \cdot g \geq 0$ a.e.\ in $\T$ and  $\|f+g\|_{L^1(\T)} = \|f\|_{L^1(\T)} + \|g\|_{L^1(\T)}$.

Consider, for $t\in [0,2 \pi]$, the functions
$$
F_1(t):=\int_0^t (f+g)^*(s)\, ds, \qquad F_2(t):=\int_0^t \left(f^*(s) +g^*(s) \right)\, ds.
$$
We will prove that $F_1 = F_2$. Since $\|f\|_{L^1(\T)} + \|g\|_{L^1(\T)} =\|f+g\|_{L^1(\T)}$, we have, via \eqref{1}, that
\begin{equation*}
\begin{split}
0 & =  \|f\|_{\laf} + \|g\|_{\laf} - \|f+g\|_{\laf}  \\ 
& = \int_0^{2 \pi}(F_2(t) - F_1(t))\, d( - \varphi'(t) ) \\
& \qquad + \varphi'(2 \pi)\Big( \|f\|_{L^1(\T)} + \|g\|_{L^1(\T)} -\|f+g\|_{L^1(\T)} \Big) \\
& = \int_0^{2 \pi}(F_2(t) - F_1(t))\, d( - \varphi'(t) ).
\end{split}
\end{equation*}
The function $\varphi$ being strictly concave implies that 
$-\varphi'$ is strictly increasing, so that $d( - \varphi'(t) )>0$. 
Since $F_2\ge F_1$, it follows that   $F_1 = F_2$.  
We deduce  that  $(f+g)^* = f^* + g^*$.
\end{proof}

%%%%%%%%%%%%%%%%%%%

Theorem \ref{t-5} together with Lemma \ref{l-3} yield the following result.

\begin{corollary}\label{cor}
Let $\varphi$ be a strictly increasing and strictly  concave  function on $[0,2\pi]$ with 
$\varphi(0)=0$. Then every inner  function is an extreme point of the unit ball of ${\hf}$. 
\end{corollary}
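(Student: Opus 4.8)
The plan is to combine Theorem~\ref{t-5} with Lemma~\ref{l-3}, which together give the conclusion almost immediately. The strategy is to verify that the hypotheses of Lemma~\ref{l-3} are a sub-case of the hypotheses of Theorem~\ref{t-5}, so that the additive-rearrangement condition required by Theorem~\ref{t-5} holds automatically whenever $\varphi$ is strictly increasing and strictly concave.

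First I would observe that Lemma~\ref{l-3} is stated under exactly the hypothesis that $\varphi$ is strictly increasing and strictly concave on $[0,2\pi]$ with $\varphi(0)=0$, which is precisely the hypothesis of the Corollary. Thus, under the assumptions of the Corollary, Lemma~\ref{l-3} applies and tells us that for every pair $f,g\in\laf$ satisfying
$$
\|f\|_{\laf}+\|g\|_{\laf}=\|f+g\|_{\laf},
$$
one has $f^*+g^*=(f+g)^*$. This is exactly the structural hypothesis imposed on the space $\laf$ in the statement of Theorem~\ref{t-5}.

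Next I would note that a strictly concave function is in particular concave, so the remaining hypotheses of Theorem~\ref{t-5}, namely that $\varphi$ be increasing and concave with $\varphi(0)=0$, are all implied by the stronger hypotheses of the Corollary. Hence every hypothesis of Theorem~\ref{t-5} is satisfied, and applying that theorem yields that every inner function is an extreme point of the unit ball of $\hf$, which is the desired conclusion.

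Since this is purely a matter of chaining two already-proven results, there is no genuine obstacle here; the only thing to check carefully is that the implications between the various regularity conditions on $\varphi$ (strictly increasing versus increasing, strictly concave versus concave) run in the correct direction, which they do. The entire argument therefore reduces to the single sentence that Lemma~\ref{l-3} supplies the hypothesis of Theorem~\ref{t-5}.
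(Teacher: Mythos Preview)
Your proposal is correct and matches the paper's approach exactly: the paper simply states that the Corollary follows from Theorem~\ref{t-5} together with Lemma~\ref{l-3}, and your write-up spells out precisely how the hypotheses line up.
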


\begin{remark}
(i) For the Lorentz space $L^{p,1}$, with $1 < p < \infty$, 
Carothers and Turett have shown that if $f, g \in L^{p,1}$ satisfy 
$\|f\|_{p,1}+ \|g\|_{p,1} =\|f+g\|_{p,1}$, then $f^* + g^*=(f+g)^*$,
\cite[Lemma 1]{carothers-turett}. Accordingly, from Theorem \ref{t-5} it follows that 
every inner function is an extreme point of the unit ball of $H(L^{p,1})$.

(ii) The requirement  of $\varphi(t)$ being  strictly concave can not be dropped
in Corollary \ref{cor}. 
To see this, consider $\varphi(t)=t$, in which case $\laf=L^1(\T)$ so
${\hf}=\h$, where inner functions are not extreme points.
\end{remark}

\begin{acknowledgements*}
The authors thank Sergey V. Astashkin for bringing this problem to their attention.
\end{acknowledgements*}

%%%%%%%%%%%%%%%%%

%%%%%%%%%%%%%%%%%%%%%%%%%%%%%%%%%

\end{document}